\documentclass[12pt, a4paper, english, reqno]{amsart}
\usepackage{a4wide}
\usepackage[utf8]{inputenc}
\usepackage{amsmath}
\usepackage{amsthm,amssymb}
\usepackage{nicefrac,amsfonts}
\usepackage{thmtools,mathtools,leftindex,leftidx}
\usepackage{caption,subcaption}
\usepackage{epsfig,graphicx,graphics,color,tikz,tikz-cd,tcolorbox}
\usepackage{enumerate,enumitem}
\usepackage[sort,nocompress]{cite}
\usepackage{xspace}
\usepackage{bbm}
\usepackage{hyperref}
\usepackage[capitalise]{cleveref}
\usepackage{float}
\hypersetup{
    colorlinks=true,       
    linkcolor=blue,        
    citecolor=magenta,         
    filecolor=magenta,     
    urlcolor=cyan,         
    linktoc=all
}

\usepackage[parfill]{parskip}

\newcommand{\R}{\mathbb{R}}

\newtheorem{Theorem}{Theorem}
\newtheorem{Lemma}{Lemma}
\newtheorem{Corollary}{Corollary}
\newtheorem{Example}{Example}
\newtheorem{Prop}{Proposition}

\title[ultra log-concavity of matroid intersection]{A note on the ultra log-concavity\\ of matroid intersection}
\date{}

\author[Adam Schweitzer]{Adam Schweitzer}
\thanks{Department of Mathematics, KTH Royal Institute of Technology, Stockholm, Sweden.} 
\subjclass[2020]{05B35, 05A20}
\begin{document}
\maketitle
\begin{abstract}
\noindent In 1971, Mason conjectured that the numbers of independent sets of fixed size in a matroid constitute an ultra log-concave sequence. In 2020, this conjecture was proven by Brändén and Huh and independently by Anari, Liu, Gharan and Vinzant. Recently, this result was extended to $M^\natural$-concave functions. In this note, we make the next step by proving it for $M_2^\natural$-concave functions. This shows the same property for the intersection of any pair of matroids (which itself may not be a matroid). 
 Furthermore, we show that this can not be further extended to the intersections of three matroids by including a counterexample of partition matroids.  
\end{abstract}
\section{Overview of results}
For a matroid $M$ with $n$~elements, let $I_k$ denote the number of independent sets of size~$k$. 
In \cite{mason}, Mason formulated a hierarchy of conjectures, that, in its strongest form, states that the sequence $I_0,I_1,\ldots, I_n$ satisfies \emph{ultra log-concavity}, that is, $$\frac{I_k}{\binom{n}{k}} \frac{I_{k+2}}{\binom{n}{k+2}}\leq \left(\frac{I_{k+1}}{\binom{n}{k+1}}\right)^2,\quad  0\leq k \leq n-2.$$

The reader may find more about this conjecture and the basics of matroid theory in \cite[Conjecture 15.2.2]{oxley2006matroid}.
With the recent advances in the theory of completely log-concave and Lorentzian polynomials, this conjecture was proven in \cite{completelylogconcave} by  Anari, Liu, Gharan and Vinzant and in \cite{lorentzian} by Brändén and Huh, independently.

In \cite[Theorem 1.4]{mnatural} this result was generalized to $M^\natural$-concave functions by defining for any finite set $E$ the numbers $$I_{q,\nu;k}=\sum_{I\in \binom{E}{k}}q^{-\nu(I)}$$ for $0<q\leq 1, k\in \mathbb{Z}$ and an $M^\natural$-concave function $ \nu: 2^E \to \R \cup \{-\infty \}$. The authors show that $$\frac{I_{q,\nu;k}}{\binom{|E|}{k}} \frac{I_{q,\nu;k+2}}{\binom{|E|}{k+2}}\leq \left(\frac{I_{q,\nu;k+1}}{\binom{|E|}{k+1}}\right)^2.$$
Note that if we take the function $\nu$ to be the \emph{matroid valuation} function that associates 0 to the independent sets of a matroid and $-\infty$ to any other set, we recover the original statement of Mason's conjecture.

In this paper, we show the following:
\begin{Theorem}\label{main-thm}
For any real number $0 <q\leq 1$, the pair of $M^\natural$-concave functions
$${\nu_1,\nu_2: 2^E \to \R\cup\{-\infty\},}$$ with $\nu=\nu_1+\nu_2$ satisfies $$\frac{I_{q,\nu;k}}{\binom{|E|}{k}} \frac{I_{q,\nu;k+2}}{\binom{|E|}{k+2}}\leq \left(\frac{I_{q,\nu;k+1}}{\binom{|E|}{k+1}}\right)^2.$$
\end{Theorem}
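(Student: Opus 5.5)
The plan is to encode both $\nu_1$ and $\nu_2$ in Lorentzian polynomials, combine them with operations that preserve the Lorentzian property, and read off the $I_{q,\nu;k}$ as the coefficients of a bivariate Lorentzian polynomial. Throughout, write $n=|E|$.

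\textbf{Step 1: two Lorentzian polynomials.} Set $a_I=q^{-\nu_1(I)}$ and $b_I=q^{-\nu_2(I)}$, with the convention $q^{-(-\infty)}=0$. The input I will use is the result of Brändén--Huh underlying \cite[Theorem 1.4]{mnatural}: for $0<q\le 1$ and an $M^\natural$-concave $\mu$, the homogenized generating polynomial $\sum_{I\subseteq E} q^{-\mu(I)}\, x^I w^{n-|I|}$ is Lorentzian.

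\begin{itemize}
\item Applied to $\nu_1$, this gives that $f(x,w)=\sum_I a_I\, x^I w^{n-|I|}$ is Lorentzian.
\item Next I pass to the complement. The function $\nu_2^*(J)=\nu_2(E\setminus J)$ is again $M^\natural$-concave, since the $M^\natural$ exchange axiom is symmetric under $I\mapsto E\setminus I$. Hence $g(y,z)=\sum_J b_{E\setminus J}\, y^J z^{n-|J|}$ is also Lorentzian.
\end{itemize}

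I expect this step to be the main obstacle. If the cited Lorentzian statement is not available in exactly this form (with the $q^{-\mu}$ normalization and for $M^\natural$- rather than $M$-concave functions), I will pass through the standard homogenization. This identifies $M^\natural$-concave functions on $2^E$ with $M$-concave functions on $\{0,1\}$-vectors with one extra slack coordinate $w$. I will also check the sign convention carefully, since $0<q\le1$ means $q^{-\mu}$ corresponds to $q^{\text{convex}}$ with $-\mu$ $M$-convex.

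\textbf{Step 2: product and diagonal substitution.} The product of Lorentzian polynomials in disjoint variables is Lorentzian (Brändén--Huh). A linear substitution of variables by a nonnegative matrix also preserves the Lorentzian property, so I may identify $y_i=x_i$ for each $i\in E$. Therefore
$$h(x,w,z)=f(x,w)\,g(x,z)$$
is a Lorentzian polynomial of degree $2n$.

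\textbf{Step 3: extract the coefficient of $x_1\cdots x_n$.} Partial derivatives preserve the Lorentzian property, and so does setting variables to $0$. Hence
$$p(w,z)=\partial_{x_1}\cdots\partial_{x_n} h\big|_{x=0}$$
is a Lorentzian bivariate form of degree $n$. Since $f$ and $g$ are multiaffine in $x$, the coefficient of $x_1\cdots x_n$ in $h$ comes only from pairs $(I,J)$ with $J=E\setminus I$. This gives
$$p(w,z)=\sum_{I} a_I\, b_I\, w^{n-|I|} z^{|I|}=\sum_{k=0}^n I_{q,\nu;k}\, w^{n-k}z^{k},$$
because $a_Ib_I=q^{-\nu_1(I)-\nu_2(I)}=q^{-\nu(I)}$.

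\textbf{Step 4: conclude.} A bivariate homogeneous polynomial $\sum_k c_k w^{n-k}z^k$ with nonnegative coefficients is Lorentzian exactly when $c_k/\binom{n}{k}$ is log-concave with no internal zeros. This follows from the quadratic-form condition on the derivatives $\partial_w^{n-k-2}\partial_z^{k}p$. Applied to $p$, this yields precisely the inequality in \cref{main-thm}.
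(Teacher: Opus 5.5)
Your proof is correct, and it combines the two $M^\natural$-concave functions by a different mechanism than the paper.

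The common ingredients are the same. Both arguments apply Theorem~\ref{thm-charact} to $\nu_1$ and to $\nu_2^*(S)=\nu_2(E\setminus S)$. The $M^\natural$-concavity of $\nu_2^*$ is exactly Lemma~\ref{lemma-inv}, so your Step~1 is not actually an obstacle. Both arguments also finish with Proposition~\ref{prop-ultra-lorentz}, and only its ``only if'' direction is needed.

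The paper proceeds as follows:
\begin{enumerate}
\item It turns $\nu_2$ into the differential operator $P_{\nu_2,q}(\partial_x,\partial_y)$.
\item It certifies via the Ross--S\"u{\ss}--Wannerer criterion (Theorem~\ref{thm-dual}) that this operator preserves Lorentzianity.
\item It applies the operator to $y^{|E|}Z_{\nu_1,q}(x,z)$ to obtain $R_{\nu_1,\nu_2,q}$.
\item It sets $x=0$.
\end{enumerate}

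You instead form the product $Z_{\nu_1,q}(x,w)\,Z_{\nu_2^*,q}(x,z)$ and extract the coefficient of $x^E$ by $\partial_x^E$ followed by $x=0$. Multiaffinity forces the surviving pairs to be complementary, $(I,E\setminus I)$, and this is exactly what produces $q^{-\nu_1(I)-\nu_2(I)}$.

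Your route uses only elementary closure properties from Br\"and\'en--Huh: products, nonnegative linear substitutions, partial derivatives, and setting variables to zero. It avoids Theorem~\ref{thm-dual} altogether. In effect you prove by hand the special case of that theorem you need. Indeed, the dual polynomial verified in the proof of Lemma~\ref{lemma-operator} is, up to the factor $1/|E|!$, precisely your factor $Z_{\nu_2^*,q}$.

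The paper's route has the advantage of producing the finer polynomial $R_{\nu_1,\nu_2,q}$, from which Theorem~\ref{main-thm-2} also follows by a different specialization. Your trick recovers that as well: the coefficient of $u^E$ in $Z_{\nu_1,q}(x+u,z)\,Z_{\nu_2^*,q}(u,y)$ is exactly $R_{\nu_1,\nu_2,q}$.
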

The sum of two $M^\natural$-concave functions need not be  $M^\natural$-concave itself; such functions are called \emph{$M_2^\natural$-concave}. This concept was originally introduced by Murota in \cite{murota}, where the properties of $M^\natural$-concave functions that extend to this larger class are studied. Thus, we may regard this theorem as a strengthening of the statement proven in \cite{mnatural} to $M_2^\natural$-concave functions.

If we take any two matroids $M_1,M_2$ on the same ground set $E$, and take $\nu_1,\nu_2$ to be the two corresponding matroid valuation functions, we get the following corollary, generalizing Mason's conjecture to the intersection of matroids.

\begin{Corollary}
For matroids $M_1,M_2$ on the same ground set with $n$ elements, let $I_k$ denote the number of common independent sets of size $k$. Then, the following holds $$\frac{I_k}{\binom{n}{k}} \frac{I_{k+2}}{\binom{n}{k+2}}\leq \left(\frac{I_{k+1}}{\binom{n}{k+1}}\right)^2.$$
\end{Corollary}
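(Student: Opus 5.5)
The plan is to derive the Corollary directly from \cref{main-thm} by specializing $\nu_1,\nu_2$ to matroid valuation functions, as the text preceding the statement indicates. For $i=1,2$, define $\nu_i:2^E\to\R\cup\{-\infty\}$ by $\nu_i(I)=0$ if $I$ is independent in $M_i$ and $\nu_i(I)=-\infty$ otherwise.

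The first step is to confirm that each $\nu_i$ is $M^\natural$-concave. This is the standard fact, already used in \cite{mnatural} to recover Mason's conjecture, that the effective domain of an $M^\natural$-concave function with constant value $0$ is exactly a family of independent sets of a matroid. Concretely, the $M^\natural$-exchange condition requires the following. For independent $X,Y$ and $x\in X\setminus Y$, either $X-x$ and $Y+x$ are both independent, or there is $y\in Y\setminus X$ with $X-x+y$ and $Y+x-y$ both independent.

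If $Y+x$ is independent, we are in the first case, since $X-x$ is a subset of an independent set. Otherwise $Y+x$ contains a unique circuit $C$ through $x$. I would then look for $y\in C\setminus X$ such that $X-x+y$ is independent; such a $y$ makes $Y+x-y$ independent automatically, because it breaks $C$. I expect this exchange argument to be the main technical point. I would cite it as the known characterization of matroids through $M^\natural$-concavity (Murota) rather than reprove it, since all values are $0$ and the inequality parts of the exchange condition are trivial.

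Next, observe that $\nu=\nu_1+\nu_2$ takes the value $0$ on common independent sets of $M_1$ and $M_2$, and $-\infty$ on all other sets. Fix any $0<q<1$. We take $q<1$ strictly, to avoid the ambiguous expression $1^{\infty}$. Under the natural convention $q^{-(-\infty)}=q^{+\infty}=0$, we have $q^{-\nu(I)}=1$ when $I$ is commonly independent and $0$ otherwise. Hence $I_{q,\nu;k}=I_k$ for every $k$.

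Finally, applying \cref{main-thm} with $|E|=n$ yields exactly the claimed inequality $\frac{I_k}{\binom{n}{k}}\frac{I_{k+2}}{\binom{n}{k+2}}\le\left(\frac{I_{k+1}}{\binom{n}{k+1}}\right)^2$.
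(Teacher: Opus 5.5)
Your proposal is correct and follows the paper's route: the paper takes $\nu_1,\nu_2$ to be the matroid valuation functions of $M_1,M_2$ (citing \cite[Example 1.1]{mnatural} for their $M^\natural$-concavity), observes that $q^{-\nu(I)}$ is $1$ on common independent sets and $0$ otherwise, and applies Theorem~\ref{main-thm}. Your exchange sketch proves the direction you actually need (a suitable $y\in C\setminus X$ exists, since otherwise $C-x\subseteq \mathrm{cl}(X-x)$ would give $x\in\mathrm{cl}(X-x)$), but the ``standard fact'' as you phrase it is overstated: effective domains of $M^\natural$-concave functions are generalized matroids (for example, the family of bases of a matroid), not always families of independent sets.
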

Example~\ref{ex:3part} below is showing that the statement above does not generalize further, even in the case of three partition matroids (see ).

Additionally, we prove the following theorem.
\begin{Theorem}\label{main-thm-2}
For any pair of  $M^\natural$-concave functions $\nu_1,\nu_2: 2^E \to \R\cup\{-\infty\}$ and real number ${0 <q\leq 1}$ let $$J_{\nu_1,\nu_2, q;k}:=\sum_{\substack{S_1\subseteq S_2\\ |S_2\setminus S_1|=k} } q^{-\nu_1(S_1)-\nu_2(S_2)}.$$
Then $$\frac{J_{\nu_1,\nu_2, q;k}}{\binom{|E|}{k}} \frac{J_{\nu_1,\nu_2, q;k+2}}{\binom{|E|}{k+2}}\leq \left(\frac{J_{\nu_1,\nu_2, q;k+1}}{\binom{|E|}{k+1}}\right)^2.$$
\end{Theorem}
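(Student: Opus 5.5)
The plan is to reduce Theorem~\ref{main-thm-2} to the same Lorentzian-polynomial machinery that proves Theorem~\ref{main-thm}, by encoding each pair $S_1\subseteq S_2$ as a single set on a doubled ground set. Let $E'=\{e' : e\in E\}$ be a disjoint copy of $E$. To a pair $S_1\subseteq S_2$ we associate
$$T=S_1\sqcup (E\setminus S_2)'\subseteq E\sqcup E'.$$
On $E\sqcup E'$ we define $\mu_1(A\sqcup B')=\nu_1(A)+\tilde\nu_2(B)$, where $\tilde\nu_2(B)=\nu_2(E\setminus B)$. The function $\tilde\nu_2$ is $M^\natural$-concave, because complementation corresponds to the reflection $x\mapsto \mathbf 1-x$, which preserves $M^\natural$-concavity. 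Hence $\mu_1$ is $M^\natural$-concave as a direct sum. Let $\mu_2$ be the valuation of the partition matroid with blocks $\{e,e'\}$ of capacity $1$. Then $\mu=\mu_1+\mu_2$ is finite exactly on the sets $T$ coming from pairs $S_1\subseteq S_2$, and $q^{-\mu(T)}=q^{-\nu_1(S_1)-\nu_2(S_2)}$. Moreover $|S_2\setminus S_1|=n-|T|$ with $n=|E|$, since each $e\in E$ lies in exactly one of $S_1$, $S_2\setminus S_1$ and $E\setminus S_2$.

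Applying Theorem~\ref{main-thm} directly to $\mu$ only gives ultra log-concavity with respect to $\binom{2n}{\cdot}$, which is too weak. I will therefore use the proof of Theorem~\ref{main-thm} rather than its statement. Presumably that proof shows that a generating polynomial of the form
$$\sum_{T}q^{-\mu(T)}\prod_{t\in T}y_t\prod_{t\notin T}z_t,$$
suitably symmetrized or projected, is Lorentzian, using closure of Lorentzian polynomials under products and under the linear operators that implement the intersection with the second $M^\natural$-concave function. The key observation is that $\mu_2$ groups the $2n$ elements into $n$ blocks of size two. Within each block exactly one of the states $e\in T$, $e'\in T$, or neither occurs, and the last state is exactly the event $e\in S_2\setminus S_1$.

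So I will aim to prove that
$$P(x,w)=\sum_{S_1\subseteq S_2}q^{-\nu_1(S_1)-\nu_2(S_2)}\prod_{e\in S_2\setminus S_1}x_e\prod_{e\notin S_2\setminus S_1}w_e$$
is Lorentzian in $2n$ variables. To do this, start from the Lorentzian polynomials of $\nu_1$ and $\tilde\nu_2$ given by Brändén--Huh. Multiply them in disjoint variables, and then apply the block-wise linear map that merges $e$ and $e'$. This map sends $y_e z_{e'}\mapsto w_e$, $z_e y_{e'}\mapsto w_e$, $z_ez_{e'}\mapsto x_e$ and kills $y_ey_{e'}$. It is the same type of operator (a symbol of a Lorentzian-preserving map on a pair of variables) that I expect to appear in the proof of Theorem~\ref{main-thm}, and showing that it preserves Lorentzianity is the main obstacle. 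I would first try to verify it via the Brändén--Huh criterion that a linear operator preserves Lorentzian polynomials when its symbol is Lorentzian, checking the low-degree symbol of this two-variable block map by hand.

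Once $P$ is Lorentzian, setting $x_e=x$ and $w_e=w$ for all $e$ gives the bivariate Lorentzian polynomial
$$\sum_k J_{\nu_1,\nu_2,q;k}\,x^kw^{n-k},$$
homogeneous of degree $n$. For a bivariate homogeneous polynomial of degree $n$, the Lorentzian property is equivalent to ultra log-concavity of the coefficients normalized by $\binom{n}{k}$. This is exactly the claimed inequality.
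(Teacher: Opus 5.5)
\textbf{There is a genuine gap.} The problem is worse than an unverified operator. The $2n$-variable polynomial $P(x,w)$ you aim for is in general \emph{not} Lorentzian, so no chain of Lorentz-preserving operations can produce it.

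Every monomial of $P$ contains exactly one variable from each pair $\{x_e,w_e\}$. Take $E=\{1,2\}$ and let $c_A$ be the coefficient of $x^Aw^{E\setminus A}$. In the variable order $(x_1,w_1,x_2,w_2)$ the Hessian of $P$ is $\begin{pmatrix}0&C\\ C^{T}&0\end{pmatrix}$ with $C=\begin{pmatrix}c_{\{1,2\}}&c_{\{1\}}\\ c_{\{2\}}&c_{\emptyset}\end{pmatrix}$. Its eigenvalues are $\pm$ the singular values of $C$, so $P$ is Lorentzian only if $\det C=0$.

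Now take $\nu_1$ to be the valuation of $U_{1,2}$ and $\nu_2\equiv 0$. Then $P=3w_1w_2+2x_1w_2+2w_1x_2+x_1x_2$ and $\det C=3-4\neq 0$, so the Hessian has two positive eigenvalues. For larger $E$, differentiating in one variable from each of the other blocks shows that every minor $c_{A\cup\{e,f\}}c_A-c_{A\cup\{e\}}c_{A\cup\{f\}}$ must vanish. That essentially forces $P$ to be a product of linear forms.

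The same defect already sits in your starting point. The Lorentzian generating polynomial of Theorem~\ref{thm-charact} uses a \emph{single} homogenizing variable $y^{|E\setminus S|}$. The per-element version $\sum_S q^{-\nu(S)}\prod_{e\in S}y_e\prod_{e\notin S}z_e$ is not Lorentzian: for $U_{1,2}$ it is $z_1z_2+y_1z_2+z_1y_2$, whose support is not M-convex. Your block-merging map needs exactly these per-element variables $z_e,z_{e'}$. Your doubled ground set with the partition matroid on the blocks $\{e,e'\}$ is a correct encoding, but, as you observe, it only yields the weaker $\binom{2n}{\cdot}$ statement.

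\textbf{The missing idea.} Use per-element variables only for the elements of $S_2\setminus S_1$, use global variables for everything else, and enforce $S_1\subseteq S_2$ by differentiation. The paper proves (Lemma~\ref{lemma-operator}) that $P_{\nu,q}(\partial_x,\partial_y)=\sum_S q^{-\nu(S)}\frac{|S|!}{|E|!}\partial_x^{S}\partial_y^{|E\setminus S|}$ preserves Lorentzianity. This uses the criterion of Ross, Sü{\ss} and Wannerer (Theorem~\ref{thm-dual}), whose condition reduces to Theorem~\ref{thm-charact} for the complemented function of Lemma~\ref{lemma-inv}.

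Apply $P_{\nu_1,q}(\partial_x,\partial_y)$ to the Lorentzian polynomial $y^{|E|}Z_{\nu_2,q}(x,z)$. Since $\partial_x^{S_1}x^{S_2}$ vanishes unless $S_1\subseteq S_2$, the result is the Lorentzian polynomial $\sum_{S_1\subseteq S_2}q^{-\nu_1(S_1)-\nu_2(S_2)}x^{S_2\setminus S_1}y^{|S_1|}z^{|E\setminus S_2|}$. This is the paper's $R_{\nu_1,\nu_2,q}$ up to relabeling $\nu_1,\nu_2$. It has per-element $x$ but only two global variables $y,z$.

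Setting all $x_e=x$ and $y=z$ gives $\sum_k J_{\nu_1,\nu_2,q;k}\,x^kz^{|E|-k}$, and your final step via Proposition~\ref{prop-ultra-lorentz} then applies. In the counterexample above this polynomial is $3z^2+4xz+x^2$, which is indeed Lorentzian, even though your intermediate $P$ is not.
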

\newpage
Similarly to the case before, if we take $\nu_1,\nu_2$ to be the corresponding matroid valuation functions, we get the following corollary.
\begin{Corollary}
For any pair of matroids $M_1,M_2$ on the same groundset $E$ with $n$ elements let $J_k$ be the number of pairs of independent sets of $I_1\in M_1, I_2\in M_2$ such that $I_1\subseteq I_2$ and $|I_2\setminus I_1|=k$. Then  $$\frac{J_k}{\binom{n}{k}} \frac{J_{k+2}}{\binom{n}{k+2}}\leq \left(\frac{J_{k+1}}{\binom{n}{k+1}}\right)^2.$$
\end{Corollary}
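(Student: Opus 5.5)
This corollary is a direct specialization of Theorem~\ref{main-thm-2}, in the same way that the first corollary specializes Theorem~\ref{main-thm}. I would take $\nu_1$ and $\nu_2$ to be the matroid valuation functions of $M_1$ and $M_2$. Concretely, $\nu_i(S)=0$ if $S$ is independent in $M_i$, and $\nu_i(S)=-\infty$ otherwise.

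\textbf{Step 1: the valuations are admissible.} I would check that each $\nu_i$ is $M^\natural$-concave, so that Theorem~\ref{main-thm-2} applies. This is the standard fact already used in the introduction to recover Mason's conjecture. The exchange inequality for $\nu_i$ only constrains pairs $X,Y$ with $\nu_i(X),\nu_i(Y)>-\infty$, that is, pairs of independent sets. For such pairs, the required exchange (either $X-x$ and $Y+x$, or $X-x+y$ and $Y+x-y$, stay independent) is exactly the augmentation/exchange property of independent sets. That property is the well-known characterization of matroids as $M^\natural$-convex families.

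\textbf{Step 2: the weights become indicators.} I would fix any $q$ with $0<q<1$, say $q=\tfrac12$, and use the convention $q^{+\infty}=0$. This is the convention already implicit in the paper's reduction from $I_{q,\nu;k}$ to the count of independent sets. Under it, the summand $q^{-\nu_1(S_1)-\nu_2(S_2)}$ equals $1$ when $S_1$ is independent in $M_1$ and $S_2$ is independent in $M_2$. It equals $0$ otherwise, because then the exponent is $+\infty$. Consequently
\[
J_{\nu_1,\nu_2,q;k}=\#\{(S_1,S_2): S_1\subseteq S_2,\ |S_2\setminus S_1|=k,\ S_1\in\mathcal I(M_1),\ S_2\in\mathcal I(M_2)\}=J_k .
\]

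\textbf{Step 3: conclude.} I would substitute $J_{\nu_1,\nu_2,q;k}=J_k$, together with $|E|=n$, into the inequality of Theorem~\ref{main-thm-2}. This yields the claimed ultra log-concavity of $(J_k)_k$.

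\textbf{Main obstacle.} There is no real difficulty; the argument is bookkeeping. The one point needing care is matching roles correctly: the smaller set $S_1$ must be weighted by $\nu_1$, the valuation of $M_1$, and the larger set $S_2$ by $\nu_2$. This matches the corollary's condition $I_1\in M_1$, $I_2\in M_2$.
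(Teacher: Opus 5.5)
Your proposal is correct and follows the paper's route: specialize Theorem~\ref{main-thm-2} to the two matroid valuation functions, so that $J_{\nu_1,\nu_2,q;k}=J_k$ (the paper cites the $M^\natural$-concavity of matroid valuations rather than reproving it). Your care with $0<q<1$ and the convention $q^{+\infty}=0$, and with assigning $\nu_1$ to the smaller set, is appropriate. One small point is that the simultaneous exchange in Step~1 is slightly more than plain augmentation. When $Y+x$ is dependent, take $y$ in the unique circuit of $Y+x$ with $y\notin\mathrm{cl}(X-x)$; then both $X-x+y$ and $Y+x-y$ are independent, and this is the standard fact the paper also relies on.
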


If instead we take $\nu_2(S)$ to be $0$ if $E\setminus S\in M_2$ and $-\infty$ otherwise, and $\nu_1$ as before, we obtain the following corollary. Note that this is indeed an $M^\natural$-concave function, see Lemma~\ref{lemma-inv}.
\begin{Corollary}
For any pair of matroids $M_1,M_2$ on the same groundset with $n$ elements, let $H_k$ denote the number of pairs of disjoint independent sets in $M_1$ and $M_2$ that cover $k$ elements of the groundset. Then  $$\frac{H_k}{\binom{n}{k}} \frac{H_{k+2}}{\binom{n}{k+2}}\leq \left(\frac{H_{k+1}}{\binom{n}{k+1}}\right)^2.$$
\end{Corollary}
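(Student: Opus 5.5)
The plan is to deduce this corollary from Theorem~\ref{main-thm-2}, applied to a suitable pair of $M^\natural$-concave functions and followed by a reindexing $k \mapsto n-k$. Let $E$ be the common ground set, with $|E|=n$. Define $\nu_1(S)=0$ if $S$ is independent in $M_1$ and $\nu_1(S)=-\infty$ otherwise; this is the matroid valuation function, which is $M^\natural$-concave. Define $\nu_2(S)=0$ if $E\setminus S$ is independent in $M_2$ and $\nu_2(S)=-\infty$ otherwise. By Lemma~\ref{lemma-inv}, as indicated just before the statement, $\nu_2$ is also $M^\natural$-concave.

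Fix any $0<q<1$, say $q=\tfrac12$. Then $q^{-\nu_1(S_1)-\nu_2(S_2)}$ equals $1$ when both values are $0$, and equals $0$ otherwise, with the convention $q^{+\infty}=0$. We choose $q<1$ rather than $q=1$ so that this convention is unambiguous. Hence $J_{\nu_1,\nu_2,q;k}$ counts the pairs $S_1\subseteq S_2\subseteq E$ with $S_1\in M_1$, $E\setminus S_2\in M_2$ and $|S_2\setminus S_1|=k$.

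Next, set up a bijection between these pairs and the pairs counted by $H$. Send $(S_1,S_2)$ to $(A,B)=(S_1,E\setminus S_2)$. Then $A$ is independent in $M_1$, $B$ is independent in $M_2$, and $A\cap B=\emptyset$ because $S_1\subseteq S_2$. The number of elements covered is
$$|A|+|B| = |S_1| + n - |S_2| = n-k.$$
Conversely, a disjoint pair $(A,B)$ covering $n-k$ elements gives back $S_1=A$ and $S_2=E\setminus B\supseteq A$, with $|S_2\setminus S_1|=k$. Therefore $J_{\nu_1,\nu_2,q;k}=H_{n-k}$ for all $0\le k\le n$.

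Finally, Theorem~\ref{main-thm-2} gives ultra log-concavity of the sequence $(J_k)$. Given $0\le k\le n-2$, apply it at index $k'=n-k-2$, which also lies in $\{0,\dots,n-2\}$. Using $J_{k'}=H_{k+2}$, $J_{k'+1}=H_{k+1}$, $J_{k'+2}=H_k$, together with the symmetry $\binom{n}{j}=\binom{n}{n-j}$, we obtain exactly
$$\frac{H_k}{\binom{n}{k}}\,\frac{H_{k+2}}{\binom{n}{k+2}}\le\left(\frac{H_{k+1}}{\binom{n}{k+1}}\right)^2.$$
I do not expect a real obstacle here. The only points needing care are the $-\infty$ convention, handled by the choice $q<1$, and the $M^\natural$-concavity of $\nu_2$, which is supplied by Lemma~\ref{lemma-inv}.
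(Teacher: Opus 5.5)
Your proposal is correct and takes the paper's route: apply Theorem~\ref{main-thm-2} with $\nu_1$ the matroid valuation of $M_1$ and $\nu_2(S)$ the valuation of $E\setminus S$ in $M_2$, which is $M^\natural$-concave by Lemma~\ref{lemma-inv}. The paper stops at that point, and your bijection $(S_1,S_2)\mapsto(S_1,E\setminus S_2)$ showing $J_{\nu_1,\nu_2,q;k}=H_{n-k}$, together with the index reversal using $\binom{n}{j}=\binom{n}{n-j}$, supplies a step the paper leaves implicit.
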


\section{Preliminaries}
\subsection{Notation}
We refer to a sequence $a_0,a_1,a_2, \ldots, a_n$ as \emph{log-concave} if it is non-negative, has no internal zeroes, i.e., the indices of the non-zero elements form a convex subset of the set of natural numbers $\mathbb N$, and if for every $i\in [1, n-1]$ we have $$a_{i-1}a_{i+1}\leq a_i^2.$$

We refer to a sequence $a_0,a_1, \ldots, a_n$ as \emph{ultra log-concave} if the sequence $$\frac{a_0}{\binom{n}{0}},\frac{a_1}{\binom{n}{1}},  \ldots, \frac{a_n}{\binom{n}{n}}$$ is log-concave. 

For a set $E$, we denote the set of its subsets by $2^E$, and use $\R^E$ to denote the set of real vectors indexed with the elements of $E$. 
The set of subsets of a set $E$ of size~$k$ is denoted by $\binom{E}{k}$. 

For a set $S\subseteq E$ we refer to the vector that is 1 for every element of $S$ and 0 everywhere else as the \emph{characteristic vector of $S$}. We use subsets $S\subseteq E$ and 
characteristic vectors in $\R^E$ interchangeably.

For the vectors  $x, \alpha\in \R^E$ we use the notation $x^\alpha=\prod_i x_i^{\alpha_i}$.
For a pair of vectors $\alpha,\beta\in \R^E$ we use the notation $\alpha < \beta$ (or $\alpha\leq \beta$) when $\alpha_i < \beta_i$ (or $\alpha_i\leq \beta_i$) for each $i$.
For a vector $\alpha\in \R^E$ we use the notation $$\alpha!=\prod_{i\in E}\alpha_i!.$$ Similarly, for vectors $\alpha\leq \beta$ we use the notation $$\binom{\beta}{\alpha}=\frac{\beta!}{\alpha!(\beta-\alpha)!}=\prod_{i\in E} \binom{\beta_i}{ \alpha_i}.$$

For a function $F$ depending on a vector variable $x\in \R^E$ we denote $\partial_x$ the vector of partial derivative operators $(\partial_i | i\in E )$. Thus, consistent with our notation, for a subset $S\subseteq E$ we have that $\partial_x^S=\prod_{i\in S}\partial_i$.

\subsection{Lorentzianity}
Lorentzian polynomials, introduced in \cite{lorentzian}, have a history of being used to resolve or make progress with a number of long-standing conjectures. For example, Brändén and Huh settled Mason's conjecture in \cite{lorentzian}, Hafner, Mészáros and Vidinas showed the log-concavity of the absolute values of the coefficients of the Alexander polynomial for special alternating links in \cite{alex} and An, Tung and Zhang showed that the supports of the Postnikov-Stanley polynomials are M-convex in \cite{poststan}. This concept is a central tool for our proof.

A homogeneous polynomial $P$ with real coefficients in $n$ variables of degree $d$ is said to be \emph{strictly-Lorentzian} if it meets the following two conditions:
\begin{enumerate}
\item All coefficients are positive,
\item For any $i_1,i_2,\ldots, i_{d-2}\in \{1,\ldots, n\}$ the Hessian of the degree 2 polynomial $$\partial_ {i_1}\partial_ {i_2}\cdots \partial{i_{d-2}}P$$ has one positive and $n-1$ negative eigenvalue.
\end{enumerate}
We refer to a polynomial as \emph{Lorentzian} if it is the limit of strictly-Lorentzian polynomials.
As such, all linear polynomials with non-negative coefficients are Lorentzian.

The following property links Lorentzianity to ultra log-concavity.
\begin{Prop}[{\cite[Example 2.3]{lorentzian}}] \label{prop-ultra-lorentz}
Let P be a Lorentzian polynomial of degree $d$ in two variables $x,y$. Let $a_i$ be the coefficient of $x^iy^{d-i}$. Then $a_0,a_1,\ldots ,a_d$ is an ultra log-concave sequence.
\end{Prop}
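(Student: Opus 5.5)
The plan is to prove the inequality first for strictly-Lorentzian $P$ by a direct computation on the quadratic derivatives, and then pass to limits. Write $P=\sum_{i=0}^d a_i x^i y^{d-i}$ and fix $0\le k\le d-2$. In the definition of strict Lorentzianity I take the $d-2$ derivatives to be $k$ copies of $\partial_x$ and $d-2-k$ copies of $\partial_y$. Applying $\partial_x^{k}\partial_y^{d-2-k}$ to $P$ kills every monomial except those with $i\in\{k,k+1,k+2\}$. Differentiating these three monomials gives the quadratic
\[
Q=\tfrac{(k+2)!(d-k-2)!}{2}\,a_{k+2}\,x^2+(k+1)!(d-k-1)!\,a_{k+1}\,xy+\tfrac{k!(d-k)!}{2}\,a_k\,y^2 .
\]
Its Hessian is the symmetric $2\times 2$ matrix
\[
H=\begin{pmatrix} (k+2)!(d-k-2)!\,a_{k+2} & (k+1)!(d-k-1)!\,a_{k+1}\\ (k+1)!(d-k-1)!\,a_{k+1} & k!(d-k)!\,a_k\end{pmatrix}.
\]

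Since $n=2$, strict Lorentzianity says $H$ has exactly one positive and one negative eigenvalue, i.e.\ $\det H<0$. This gives
\[
(k+2)!(d-k-2)!\,k!(d-k)!\,a_ka_{k+2}<\bigl((k+1)!(d-k-1)!\bigr)^2a_{k+1}^2 .
\]
Dividing by $(d!)^2$ and using $1/\binom{d}{j}=j!(d-j)!/d!$, this is exactly
\[
\frac{a_k}{\binom dk}\frac{a_{k+2}}{\binom d{k+2}}<\Bigl(\frac{a_{k+1}}{\binom d{k+1}}\Bigr)^2 .
\]
All $a_i$ are positive by condition (1), so there are no internal zeros, and the normalized sequence is log-concave.

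For a general Lorentzian $P$, take strictly-Lorentzian $P_m\to P$. Coefficients depend continuously on the polynomial, so the coefficients of $P$ are limits of those of $P_m$. Hence they are nonnegative and satisfy the non-strict inequalities $\frac{a_k}{\binom dk}\frac{a_{k+2}}{\binom d{k+2}}\le\bigl(\frac{a_{k+1}}{\binom d{k+1}}\bigr)^2$.

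The main obstacle is the \emph{no internal zeroes} condition, which does not survive limits by the above argument alone. The inequality only rules out isolated zeros flanked by nonzero terms; it says nothing about a gap of length at least two, as in a hypothetical $x^d+y^d$. For this step I would invoke the result of Brändén and Huh \cite{lorentzian} that the support of a Lorentzian polynomial is M-convex. In two variables, M-convexity of a set of exponents $\{(i,d-i)\}$ means precisely that the set of $i$ with $a_i\neq0$ is an interval. Combining this with the limiting inequalities shows that $a_0,\dots,a_d$ is ultra log-concave.
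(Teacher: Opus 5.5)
The paper gives no proof of this proposition. It only cites \cite[Example 2.3]{lorentzian}, so your argument supplies a proof where the paper has none, and it is correct.

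The Hessian computation for $\partial_x^{k}\partial_y^{d-2-k}P$ is right, and $\det H<0$ is exactly the strict normalized inequality. Coefficientwise convergence then gives nonnegativity and the non-strict inequalities for a general Lorentzian $P$. You also correctly note that the three-term inequalities alone do not rule out gaps of length at least two (e.g.\ $x^d+y^d$ with $d\ge 3$). Citing the M-convexity of supports of Lorentzian polynomials from \cite{lorentzian} does settle this.

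The internal-zeros step actually needs nothing beyond what you already proved. For the strictly Lorentzian approximants $P_m$, the normalized coefficients $b^{(m)}_i=a^{(m)}_i/\binom{d}{i}$ are positive and $i\mapsto\log b^{(m)}_i$ is concave. Hence for $j<i<l$ one has $b^{(m)}_i\ge (b^{(m)}_j)^{\frac{l-i}{l-j}}(b^{(m)}_l)^{\frac{i-j}{l-j}}$. Unlike the three-term inequalities, this interpolation inequality survives the limit and gives $b_i>0$ whenever $b_j,b_l>0$. Using it makes the proof self-contained, instead of resting on a substantially deeper structural theorem.
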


Lorentzian polynomials interact well with restrictions and products as presented by the following proposition.
\begin{Prop}\label{prop-restrict}
Let $P,Q$ be a Lorentzian polynomial in variables $x_1,x_2,\ldots, x_n$, then,
\begin{enumerate}
\item $P\cdot Q$ is a Lorentzian polynomial \cite[Theorem 2.30 and Corollary 2.32]{lorentzian}.
\item if we restrict any $x_i$ to 0, we obtain either 0 or a Lorentzian polynomial \cite[Lemma 2.20]{lorentzian},
\item if we substitute $x_i=x_j$ we obtain a Lorentzian polynomial \cite[Proposition 2.20]{lorentzian},
\end{enumerate}
\end{Prop}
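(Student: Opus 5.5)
The plan is to derive all three items from two structural facts about Lorentzian polynomials established in \cite{lorentzian}. The first is the characterization: a homogeneous polynomial $P$ of degree $d$ with nonnegative coefficients is Lorentzian if and only if (a) its support is $M$-convex and (b) for every $\alpha$ with $|\alpha|=d-2$, the quadratic form $\partial_x^\alpha P$ has at most one positive eigenvalue. The second is closure under nonnegative linear changes of variables: if $P(x)$ is Lorentzian in $n$ variables and $A$ is an $n\times m$ matrix with nonnegative entries, then $P(Ay)$ is either $0$ or Lorentzian. I would take both as the basic input.

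Items (2) and (3) then follow immediately from the second fact.
\begin{itemize}
\item Setting $x_i=0$ is the substitution $x=Ay$, where $A$ is the identity with its $i$-th column deleted. This $A$ has nonnegative entries, so the result is $0$ or Lorentzian.
\item Substituting $x_i=x_j$ is the substitution by the $0/1$ matrix that sends both $x_i$ and $x_j$ to the same new variable. This is again a nonnegative linear map, so the substituted polynomial is Lorentzian.
\end{itemize}

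For item (1), I would first prove that $P(x)Q(y)$ is Lorentzian when $P$ and $Q$ are in disjoint sets of variables. Then I apply the substitution $y=x$, which is a nonnegative linear map, and item (3) (or directly the second fact) gives that $P\cdot Q$ is Lorentzian.

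To show $P(x)Q(y)$ is Lorentzian, I check (a) and (b) of the characterization.
\begin{itemize}
\item \emph{Condition (a).} The support of $P(x)Q(y)$ is the product $\mathrm{supp}(P)\times\mathrm{supp}(Q)$. A product of $M$-convex sets in disjoint coordinates is $M$-convex: an exchange step in one factor stays inside that factor.
\item \emph{Condition (b).} Any derivative of order $\deg P+\deg Q-2$ has the form $\partial_x^\alpha P\cdot\partial_y^\beta Q$.
\begin{itemize}
\item If $|\alpha|=\deg P-2$, this is a nonnegative constant times the quadratic form $\partial_x^\alpha P$. That form has at most one positive eigenvalue because $P$ is Lorentzian. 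Padding its Hessian by zero blocks does not change this.
\item The case $|\beta|=\deg Q-2$ is symmetric.
\item In the mixed case the derivative equals $\ell_1(x)\ell_2(y)$, where $\ell_1,\ell_2$ are linear forms with nonnegative coefficients. The Hessian of this product is $\ell_1\ell_2^T+\ell_2\ell_1^T$ (in block form), which has rank at most $2$ and trace $0$. Hence it has at most one positive eigenvalue.
\end{itemize}
\end{itemize}

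The main obstacle is the closure under nonnegative linear maps. It is not elementary: in \cite{lorentzian} it relies on the characterization above together with a careful argument that $M$-convexity of the support and the eigenvalue condition survive the substitution. The key step there is reducing to the case where a single variable is replaced by a sum of two variables. Since the proposition is explicitly attributed to \cite{lorentzian}, I would cite this theorem rather than reprove it, and present the arguments above as the reduction of the three items to it.
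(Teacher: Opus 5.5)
Your proposal is correct, and it is not quite the paper's route, since the paper gives no argument here: it quotes each item from Brändén--Huh with a separate citation.

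\textbf{Items (2) and (3).} You observe that both are special cases of one result, closure of the Lorentzian class under substitutions $x=Ay$ with $A$ entrywise nonnegative. This packages two separately cited facts into one.

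\textbf{Item (1).} You check the combinatorial characterization directly for $P(x)Q(y)$ in disjoint variables, and then identify $y=x$. The characterization is: M-convex support, and every derivative of order $\deg-2$ has at most one positive eigenvalue. The cited source (Theorem 2.30 and Corollary 2.32 there) instead goes through the equivalence of Lorentzian polynomials with strongly/completely log-concave ones. There the disjoint-variable product is immediate, because a product of log-concave functions is log-concave, and variables are then identified the same way.

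\textbf{Comparison.} Your route avoids that analytic equivalence entirely. It is elementary once the characterization and the linear-substitution theorem are granted. The cited route is shorter only because it leans on a heavier theorem.

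\textbf{Mixed case.} Your rank-two, trace-zero argument for $\ell_1(x)\ell_2(y)$ is fine. More generally, $uv^T+vu^T$ has eigenvalues $u\cdot v\pm\|u\|\,\|v\|$ and zeros, so at most one is positive regardless of the trace.

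\textbf{Trivial cases to state.} Derivatives with $|\alpha|>\deg P$ or $|\beta|>\deg Q$ vanish. If $P$ or $Q$ is $0$, then $PQ=0$, which is Lorentzian as a limit of strictly Lorentzian polynomials, consistent with the ``either $0$ or'' phrasing of item (2).
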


For a polynomial $P\in \R[x_1,\ldots,x_n]$ we may define the operator $P(\partial_1,\ldots,\partial_n)$ that acts on polynomials in $\R[x_1,\ldots,x_n]$ where each monomial acts by differentiation linearly. Ross, Sü{\ss} and Wannerer in \cite{dually} characterize which of these operators preserve the Lorentzian property.

\begin{Theorem}\label{thm-dual}
The differential operator $P(\partial_1,\ldots,\partial_n)$ preserves the Lorentzian property if and only if the polynomial $N(x^\kappa P(x_1^{-1},\ldots, x_n^{-1}))$ is Lorentzian, where $\kappa$ is any vector such that $\deg_i(P)\leq \kappa_i$ for all $i$ and $N$ is the linear operator that takes $x^\alpha \mapsto \frac{x^\alpha}{\alpha!}$. 
\end{Theorem}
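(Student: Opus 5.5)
The plan is to reduce the statement to the Brändén--Huh symbol theorem for linear operators. That result (\cite[Theorem 3.2]{lorentzian}) says: a homogeneous linear operator $T$ on polynomials of degree at most $\kappa$ (coordinatewise) preserves the Lorentzian property if and only if its \emph{symbol}
$$\mathrm{sym}_T(x,w)=T\big((x+w)^\kappa\big)=\sum_{\gamma\leq\kappa}\binom{\kappa}{\gamma}T(x^\gamma)\,w^{\kappa-\gamma}$$
is Lorentzian. Here the variables $w$ are new and $T$ acts on $x$ only. I will assume $P$ is homogeneous, as is implicit in the statement: otherwise $P(\partial)$ cannot map homogeneous polynomials to homogeneous ones, and $x^\kappa P(x^{-1})$ would not be homogeneous either. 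I also use that $\kappa$ only needs to bound the degrees occurring. Write $P=\sum_\alpha c_\alpha x^\alpha$ and $T=P(\partial_x)$.

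\emph{Step 1: compute the symbol.} Since $\partial_x^\alpha (x+w)^\kappa=\frac{\kappa!}{(\kappa-\alpha)!}(x+w)^{\kappa-\alpha}$, we get
$$\mathrm{sym}_T(x,w)=\kappa!\sum_{\alpha}c_\alpha\frac{(x+w)^{\kappa-\alpha}}{(\kappa-\alpha)!}=\kappa!\,R(x+w),\qquad R(w):=N\big(w^\kappa P(w^{-1})\big)=\sum_\alpha c_\alpha\frac{w^{\kappa-\alpha}}{(\kappa-\alpha)!}.$$
So the symbol is, up to the positive constant $\kappa!$, exactly the polynomial appearing in the statement, evaluated at $x+w$.

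\emph{Step 2: necessity.} Suppose $T$ preserves the Lorentzian property. By the symbol theorem, $\kappa!\,R(x+w)$ is Lorentzian in the variables $(x,w)$. Setting every $x_i=0$ is allowed by Proposition~\ref{prop-restrict}(2), and gives $\kappa!\,R(w)$. This is nonzero, assuming $P\neq0$. Hence $R$ is Lorentzian.

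\emph{Step 3: sufficiency.} Suppose $R(w)$ is Lorentzian. Then $R(x+w)$ is $R$ composed with the linear map $(x,w)\mapsto x+w$, whose matrix has nonnegative entries. Brändén and Huh show that Lorentzian polynomials are preserved by such nonnegative linear changes of variables \cite[Theorem 2.10]{lorentzian}. Alternatively, one can see this directly: start from $R(w_1,\dots,w_n)$, introduce the $x$-variables by splitting each $w_i$, and invoke Proposition~\ref{prop-restrict}(3) in reverse via the characterization of Lorentzian polynomials through M-convex supports. Either way $\mathrm{sym}_T$ is Lorentzian, and the symbol theorem shows that $T=P(\partial)$ preserves the Lorentzian property.

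The main obstacle is really the symbol theorem itself, which I am taking as a black box. If it had to be proved here, the hard direction would be the sufficiency direction: showing that a Lorentzian symbol forces $T(f)$ to be Lorentzian for every Lorentzian $f$. The natural route is to write $T(f)$ as a contraction of $\mathrm{sym}_T(x,w)$ with $f$. Concretely, one applies the operator $f^{\ast}(\partial_w)$ built from $f$ and then sets $w=0$, and one needs that this contraction preserves Lorentzianity, using the closure properties under products, specialization and differentiation. A smaller point to check is the degree bookkeeping: the symbol theorem is stated for operators on polynomials of degree $\leq\kappa$, and the freedom in choosing $\kappa$ must be reconciled with that. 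Since $R$ changes only by multiplication by a monomial-type shift when $\kappa$ is enlarged, the two conditions should be equivalent for all admissible $\kappa$.
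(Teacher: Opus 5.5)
The paper gives no proof of this statement; it quotes it from Ross--Süß--Wannerer, so your argument has to stand on its own. Your Step~1 computation is correct. Step~3 is a sound proof of sufficiency for inputs with $\deg_i\le\kappa_i$: $R(x+w)$ is Lorentzian by the nonnegative-substitution theorem, and the Brändén--Huh symbol theorem then shows that $P(\partial)$ preserves Lorentzian polynomials in that space. Drop the alternative via Proposition~\ref{prop-restrict}(3) ``in reverse''. That proposition only identifies variables and gives nothing about splitting them.

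\textbf{The genuine problem is Step~2.} Brändén--Huh's Theorem~3.2 is only the sufficient direction: a Lorentzian symbol implies preservation. No converse is proved there, and for general homogeneous operators the converse is false. Take $T$ on polynomials in $x_1,x_2$ with $\deg_i\le 1$ that keeps the degree-one part and kills every other homogeneous component. Then $T$ sends Lorentzian polynomials to Lorentzian polynomials or $0$. Yet
\[
T\big((x_1+w_1)(x_2+w_2)\big)=x_1w_2+x_2w_1,
\]
and this has a Hessian with two positive eigenvalues, so it is not Lorentzian. Your necessity argument therefore rests on a false black box. The repair is already contained in your Step~1: set $w=0$ there to get
\[
P(\partial)\,x^\kappa=\kappa!\,R(x).
\]
Since $x^\kappa$ is a product of linear forms, it is Lorentzian. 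So $R$ is Lorentzian whenever $P(\partial)$ preserves the Lorentzian property, and no symbol theorem is needed for this direction.

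\textbf{The independence of $\kappa$ is misjustified.} Enlarging $\kappa$ to $\kappa+e_i$ does not multiply $R$ by a monomial. It replaces $R$ by $\int_0^{x_i}R(x_1,\dots,t,\dots,x_n)\,dt$, which rescales coefficients by exponent-dependent factors. For example, with $P=x_1+x_2$, $\kappa=(1,1)$ gives $x_1+x_2$, while $\kappa=(2,1)$ gives $x_1x_2+\tfrac12x_1^2$.

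The claim itself is still true. One direction is $R_\kappa=\partial_iR_{\kappa+e_i}$. For the other, the integration operator has symbol
\[
\prod_{j\neq i}(x_j+w_j)^{\kappa_j}\cdot\frac{(x_i+w_i)^{\kappa_i+1}-w_i^{\kappa_i+1}}{\kappa_i+1}.
\]
The last factor has ultra log-concave coefficients, so this is a product of Lorentzian factors, and hence integration preserves Lorentzianity.

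You need this step if ``preserves the Lorentzian property'' is meant for inputs of arbitrary degree. Theorem~3.2 with a fixed $\kappa$ only covers inputs with $\deg_i\le\kappa_i$.
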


\subsubsection{ $M^\natural$-concavity}
Now let us introduce $M^\natural$-concavity and the recent developments in its connection to Lorentzian polynomials that we utilize for our proof.

We refer to a function $\nu: 2^E \to \R\cup \{-\infty\}$ as \emph{$M^\natural$-concave} if it satisfies the following property. For any $I_1,I_2\subseteq E$ and $i_1 \in I_1\setminus I_2$ we have either
\begin{enumerate}
\item $\nu(I_1) + \nu(I_2) \leq \nu(I_1\setminus \{i_1\}) + \nu(I_2 \cup \{i_1\})$ or
\item there is $i_2 \in I_2\setminus I_1$ such that $\nu(I_1) + \nu(I_2) \leq \nu(I_1\setminus \{i_1\} \cup \{i_2\}) + \nu(I_2 \setminus \{i_2\} \cup \{i_1\}).$
\end{enumerate}
This concept was introduced by Murota and Shioura in \cite{mnaturaldef} as a discrete analogue of convexity. For a more complete introduction to this subject, we refer the reader to the book \cite{murota} by Murota.

An important class of examples are the \emph{matroid valuation functions}. If we take a matroid $M$ on the ground set $E$ the function $\nu_M: 2^E\to \R\cup\{-\infty\}$, that acts as $$S \mapsto \begin{cases}0 \text{ if $S$ is an independent set in $M$} \\ -\infty \text{ otherwise} \end{cases},$$ is $M^\natural$-concave \cite[Example 1.1]{mnatural}.

In \cite{mnatural} the authors show the following theorem:
\begin{Theorem} \label{thm-charact}
For a function $\nu: 2^E\to \R \cup\{-\infty\}$ and positive real number $q$ for any $x\in \R^E, y\in \R$ let $$Z_{\nu,q}(x,y):=\sum_{S\subseteq E} q^{-\nu(S)}x^S y^{|E\setminus S|}.$$
Then $\nu$ is  $M^\natural$-concave if and only if $Z_{\nu,q}$ is Lorentzian for every $q\leq 1$.
\end{Theorem}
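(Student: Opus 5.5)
Since Theorem~\ref{thm-charact} is quoted from \cite{mnatural}, I only outline how I would reprove it. The idea is to reduce both directions to Brändén and Huh's local characterization of Lorentzian polynomials \cite{lorentzian}. A homogeneous polynomial with nonnegative coefficients is Lorentzian if and only if two things hold: its support is M-convex, and every $(d-2)$-nd order partial derivative is a quadratic form with at most one positive eigenvalue. The polynomial $Z_{\nu,q}$ is homogeneous of degree $n=|E|$ in the variables $(x,y)$. Its support is the set $\{(S,n-|S|): \nu(S)>-\infty\}$, which is the standard homogenization of $\operatorname{dom}\nu$ by the extra coordinate $y$.

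\textbf{Support.} The first step is the support condition. I would check that $\operatorname{dom}\nu\subseteq 2^E$ is M$^\natural$-convex exactly when its homogenization is M-convex. This is the usual correspondence between M$^\natural$- and M-convex sets, proved by comparing the two exchange axioms, with the coordinate $y$ absorbing the case of an exchange with no partner element. For the converse direction this already shows that Lorentzianity of $Z_{\nu,q}$ forces $\operatorname{dom}\nu$ to be M$^\natural$-convex.

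\textbf{Quadratic forms.} Next I would compute the $(n-2)$-nd derivatives. Take a monomial $x^{T}y^{m}$ with $|T|+m=n-2$. Applying $\partial^T_x\partial_y^m$ to $Z_{\nu,q}$ gives a quadratic form in $x_i,x_j$ $(i,j\notin T)$ and $y$. Its coefficients are $q^{-\nu(T\cup\{i,j\})}$ on $x_ix_j$, a factor $(n-|T|-1)!$-type constant times $q^{-\nu(T\cup\{i\})}$ on $x_iy$, and a similar term times $q^{-\nu(T)}$ on $y^2$. The hard part is the $x$-block. After restricting to the (M-convex) support, the condition of at most one positive eigenvalue is known, from Brändén--Huh's treatment of M-concave functions, to be equivalent to a family of local inequalities on the exponents once $q\to 0$ or over all $q\le 1$. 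Specifically, among the three values $\nu(T+i+j)+\nu(T+k)$, $\nu(T+i+k)+\nu(T+j)$ and $\nu(T+j+k)+\nu(T+i)$, the maximum must be attained at least twice, together with analogous three-term conditions involving $y$ (e.g.\ $\nu(T+i+j)+\nu(T)\le \max(\nu(T+i)+\nu(T+j),\dots)$).

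\textbf{Identifying the inequalities, and the main obstacle.} These local inequalities are precisely Murota's \emph{local exchange} characterization of M$^\natural$-concavity, valid when the domain is M$^\natural$-convex. That gives both implications. For the direction from Lorentzian to M$^\natural$-concave, I would let $q\to 0$ and read off the tropical (valuated) inequalities from the eigenvalue condition. For the direction from M$^\natural$-concave to Lorentzian, I would apply the Brändén--Huh argument for valuated matroids, which reduces the eigenvalue check to rank-two cases, and verify that it works for every fixed $q\le1$. The main obstacle is the factorial weights appearing in the $y$-derivatives: $Z_{\nu,q}$ is \emph{not} the normalized polynomial $\sum q^{-\nu}x^S y^{n-|S|}/(n-|S|)!$. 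To handle this, I would first try polarizing $y$ into $n$ fresh variables, which replaces $\nu$ by an M-concave function on $E\sqcup[n]$. Then I would show that the resulting symmetric substitution is harmless using Proposition~\ref{prop-restrict}(3). If the binomial factors still obstruct the argument, I would verify the $3\times3$ eigenvalue inequality directly, using the log-concavity of $k\mapsto\binom{n}{k}$.
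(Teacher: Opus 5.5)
\textbf{There is a genuine gap: the forward implication at a fixed $q$ is never proved.}

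Your support step is fine, and so is your converse direction. The converse is even quicker than a tropical limit: Br\"and\'en--Huh's normalization operator $N$ preserves Lorentzianity, and $N(Z_{\nu,q})=\sum_S q^{-\nu(S)}x^S y^{n-|S|}/(n-|S|)!$ is exactly their normalized generating function of the M-concave homogenization of $\nu$. The gap is the implication from M$^\natural$-concavity to Lorentzianity at a \emph{fixed} $q$, which is the whole content here. (The paper itself gives no proof; it imports the statement from \cite{mnatural}.)

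For $|T|+m=n-2$ one gets
$\partial_x^T\partial_y^m Z_{\nu,q}=m!\bigl[\binom{m+2}{2}a\,y^2+(m+1)\,y\sum_i b_ix_i+\sum_{i<j}C_{ij}x_ix_j\bigr]$,
with $a=q^{-\nu(T)}$, $b_i=q^{-\nu(T+i)}$, $C_{ij}=q^{-\nu(T+i+j)}$. Its Hessian is congruent, up to a positive factor, to $\left(\begin{smallmatrix}\frac{N}{N-1}a & b^{T}\\ b & C\end{smallmatrix}\right)$ with $N=|E\setminus T|$. The normalized polynomial gives the same matrix with corner $a$.

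So the $x$-block is not the hard part; it comes from the M-concave layer $\{i,j\}\mapsto\nu(T+i+j)$ and is covered by \cite{lorentzian}. The hard part is the enlarged corner, and there is no slack: for $U_{2,N}$ the matrix $\left(\begin{smallmatrix}\frac{N}{N-1} & \mathbf 1^{T}\\ \mathbf 1 & J-I\end{smallmatrix}\right)$ is singular. Saying the eigenvalue condition is ``equivalent'' to Murota's local exchange inequalities skips exactly this. Those inequalities are the right input, but turning them into the sharp eigenvalue bound at fixed $q$ is what must be proved, and your sketch never does it.

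\textbf{Neither remedy you propose closes the gap.}

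Polarizing $y$ into $n$ variables and identifying them yields $\sum_S\binom{n}{|S|}q^{-\nu(S)}x^S y^{n-|S|}$, not $Z_{\nu,q}$. Proposition~\ref{prop-restrict}(3) justifies the substitution but cannot remove the binomial factors. Log-concavity of $k\mapsto\binom nk$ works against you here: for $n=2$, $T=\emptyset$, the polarized form only requires $4b_1b_2\ge aC_{12}$, whereas $Z_{\nu,q}$ needs $b_1b_2\ge aC_{12}$.

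A $3\times 3$ check is not enough either. The forms live in $N+1$ variables, and having at most one positive eigenvalue is not detected by $3\times3$ principal minors.

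\textbf{One way to close the gap.}

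If $a=0$, the matrix coincides with the normalized one, so \cite{lorentzian} applies. If $a>0$, a Schur complement turns the condition into $C\preceq\frac{N-1}{N}\,bb^{T}/a$. Discard loops: if $b_i=0$, the exchange axiom for $T+i+j,\ T$ forces $C_{ij}=0$, and discarding them only lowers $N$, which helps. Then rescale $x_i$ by $b_i$. The condition becomes $D\preceq(1-\frac1N)J$, with $D_{ij}=q^{\delta_{ij}}$ and $\delta_{ij}=\nu(T+i)+\nu(T+j)-\nu(T)-\nu(T+i+j)\in[0,\infty]$.

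The exchange axiom for $T+i+j,\ T+k$ gives $\delta_{ij}\ge\min(\delta_{ik},\delta_{jk})$. Hence each relation $\{\delta>\theta\}$ is an equivalence relation. It follows that $J-D=\int A_\theta\,d\mu_q(\theta)$, where $A_\theta$ is the block matrix of that partition and $\mu_q$ is a probability measure. Cauchy--Schwarz, $\sum_B(\mathbf 1_B^{T}x)^2\ge(\mathbf 1^{T}x)^2/N$, then finishes.
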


\section{Ultra log-concavity in $M_2^\natural$-concave functions}
Let us first show that our main results cannot hold for the intersection of three matroids.
\begin{Example}\label{ex:3part}

Let us show that there exist three partition matroids such that the numbers of size $k$ common independent sets do not constitute a log-concave sequence.

Let our groundset be the set $$E=\{a_1,a_2,a_3,b_1,b_2,\ldots,b_m\}$$ for some number $m$. Now, let $M_i$ be the partition matroid with one part being $$\{a_i,b_1,b_2,\ldots,b_m\}$$ and each other $a_j$ constitutes a part on its own.

The intersection contains every subset of $\{a_1,a_2,a_3\}$  as no pair of them is ever contained in the same part of any $M_i$. On the other hand, if $b_j$ is in a common independent set $I$, no $a_i$ can be in it, as $a_i$ and $b_j$ are in the same part of $M_i$. Thus, the intersection is exactly the subsets of $\{a_1,a_2,a_3\}$ and the sets $\{b_i\}$.

Thus, we have only one 3-element subset, three 2-element subsets, but $m+3$ different 1-element subsets. If we choose $m$ to be at least seven, we have that $(m+3)\cdot1> 3^2$, thus log-concavity fails, thus so does ultra log-concavity.

\end{Example}

Let us introduce a few auxiliary lemmas for the proof of our main theorem.

\begin{Lemma}\label{lemma-inv}
For any $M^\natural$-concave function   $\nu: 2^E\to \R \cup \{-\infty\}$ the function defined as $\nu^*(S):=\nu(E\setminus S)$ is also  $M^\natural$-concave.
\end{Lemma}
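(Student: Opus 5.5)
The plan is to check the exchange axiom directly, using the complementation map $S\mapsto E\setminus S$ to move back and forth between $\nu^*$ and $\nu$. Take $I_1,I_2\subseteq E$ and $i_1\in I_1\setminus I_2$. Set $A_1:=E\setminus I_1$ and $A_2:=E\setminus I_2$, so that $\nu^*(I_j)=\nu(A_j)$. Since $i_1\notin A_1$ and $i_1\in A_2$, we have $i_1\in A_2\setminus A_1$. We therefore apply the $M^\natural$-concavity of $\nu$ to the ordered pair $(A_2,A_1)$ with the element $i_1\in A_2\setminus A_1$. This yields one of two alternatives.

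\emph{First alternative.} Here
$$\nu(A_2)+\nu(A_1)\le \nu(A_2\setminus\{i_1\})+\nu(A_1\cup\{i_1\}).$$
Taking complements, $E\setminus(A_2\setminus\{i_1\})=I_2\cup\{i_1\}$ and $E\setminus(A_1\cup\{i_1\})=I_1\setminus\{i_1\}$. Hence
$$\nu^*(I_1)+\nu^*(I_2)\le \nu^*(I_1\setminus\{i_1\})+\nu^*(I_2\cup\{i_1\}),$$
which is exactly option (1) for $\nu^*$.

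\emph{Second alternative.} Here there is some $i_2\in A_1\setminus A_2=I_2\setminus I_1$ such that
$$\nu(A_2)+\nu(A_1)\le \nu(A_2\setminus\{i_1\}\cup\{i_2\})+\nu(A_1\setminus\{i_2\}\cup\{i_1\}).$$
Taking complements again, $E\setminus(A_2\setminus\{i_1\}\cup\{i_2\})=I_2\setminus\{i_2\}\cup\{i_1\}$ and $E\setminus(A_1\setminus\{i_2\}\cup\{i_1\})=I_1\setminus\{i_1\}\cup\{i_2\}$. This gives option (2) for $\nu^*$ with the same $i_2$.

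The only real obstacle is bookkeeping. One has to apply the axiom with the roles of the two sets swapped, because $i_1$ lies in the complement of $I_2$, not of $I_1$, and then verify that the complements of the exchanged sets come out correctly. Nothing beyond the definition is needed, and the $-\infty$ values cause no trouble because the inequalities are simply transported.

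As an alternative route, one could use Theorem~\ref{thm-charact}. Note that $Z_{\nu^*,q}(x,y)$ is obtained from $Z_{\nu,q}$ by exchanging the roles of $x_i$ and $y$ variable-by-variable. That route is less direct, so the elementary check above is preferable.
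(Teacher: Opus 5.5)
Your argument is correct and is essentially the paper's proof: you apply the exchange axiom of $\nu$ to the swapped pair $(E\setminus I_2,E\setminus I_1)$ with $i_1\in (E\setminus I_2)\setminus(E\setminus I_1)$ and translate both alternatives back through complements, and your complement computations (including $i_2\in I_2\setminus I_1$) all check out. The closing aside is not needed, and as stated it is not a change of variables but the reflection $Z_{\nu^*,q}(x,y)=x^{E}y^{|E|}Z_{\nu,q}(x^{-1},y^{-1})$, so that route would require a separate argument that this reflection preserves Lorentzianity.
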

\begin{proof}
We check the definition directly. Let us take $I_1,I_2\subset E$ and $i_1\in I_1\setminus I_2$, let us show that one of the two conditions must hold. Let us apply the definition of  \mbox{$M^\natural$-concavity} of $\nu$ to the sets $E\setminus I_2$ and $E\setminus I_1$ and the element $i_1 \in (E\setminus I_2) \setminus (E\setminus I_1)$. Now, either $$\nu(E\setminus I_2) + \nu(E \setminus I_1) \leq \nu(E\setminus I_2 \setminus \{i_1\}) + \nu(E \setminus I_1 \cup \{i_1\}),$$ applying the definition of $\nu^*$ $$\nu^*(I_2)+\nu^*(I_1)\leq \nu^*(I_2\cup\{i_1\})+\nu^*(I_1\setminus \{i_1\}),$$ thus the first condition holds for $\nu^*$, or there exists $i_2\in (E \setminus I_1)\setminus  (E \setminus I_2)=I_2\setminus I_1$ such that
$$\nu(E \setminus I_2) + \nu(E \setminus I_1) \leq \nu((E \setminus I_2)\setminus \{i_1\} \cup  \{i_2\}) + \nu((E \setminus I_1) \setminus \{i_2\} \cup \{i_1\})$$ applying the definition of $\nu^*$ $$\nu^*(I_2)+\nu^*(I_1) \leq \nu^*(I_2\cup\{i_1\}\setminus\{i_2\})+\nu^*(I_1\setminus\{i_1\}\cup \{i_2\}),$$ thus the second condition holds for $\nu^*$. Thus, $\nu^*$ is indeed  $M^\natural$-concave.
\end{proof}

\begin{Lemma} \label{lemma-operator}
For $0<q\leq 1$ and $M^\natural$-concave function $\nu: 2^E\to \R \cup \{-\infty\}$, if we define for $x\in \R^E,y\in \R$$$P_{\nu,q}(x,y)= \sum_{S\subseteq E}  q^{-\nu(S)}x^S y^{|E\setminus S|} \frac{|S|!}{|E|!},$$
then, the differential operator $P_{\nu,q}(\partial_{x},\partial_y)$ preserves the Lorentzian property.
\end{Lemma}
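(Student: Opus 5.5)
We apply Theorem~\ref{thm-dual} to the polynomial $P=P_{\nu,q}$ in the variables $(x,y)$, with $x\in\R^E$. Each monomial $x^S y^{|E\setminus S|}$ has degree at most $1$ in each $x_i$ and degree $|E\setminus S|\le |E|$ in $y$. We therefore take $\kappa=(\mathbf 1_E,|E|)$. The proof reduces to showing that
\[
N\bigl(x^{\mathbf 1_E}y^{|E|}P_{\nu,q}(x^{-1},y^{-1})\bigr)
\]
is Lorentzian.

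\textbf{Step 1: compute the reversed polynomial.} We have
\[
x^{\mathbf 1_E}y^{|E|}P_{\nu,q}(x^{-1},y^{-1})=\sum_{S\subseteq E}q^{-\nu(S)}\frac{|S|!}{|E|!}\,x^{E\setminus S}y^{|S|}.
\]

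\textbf{Step 2: apply $N$.} Since $x^{E\setminus S}$ is squarefree, $N$ only divides by the factorial of the $y$-exponent, namely $|S|!$. This cancels exactly the factor $|S|!$ placed in the definition of $P_{\nu,q}$, leaving
\[
\frac{1}{|E|!}\sum_{S\subseteq E}q^{-\nu(S)}x^{E\setminus S}y^{|S|}.
\]
Substituting $T=E\setminus S$ rewrites this as
\[
\frac{1}{|E|!}\sum_{T\subseteq E}q^{-\nu(E\setminus T)}x^{T}y^{|E\setminus T|}=\frac{1}{|E|!}\,Z_{\nu^*,q}(x,y),
\]
where $\nu^*(T)=\nu(E\setminus T)$.

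\textbf{Step 3: conclude.} By Lemma~\ref{lemma-inv}, $\nu^*$ is $M^\natural$-concave. By Theorem~\ref{thm-charact}, $Z_{\nu^*,q}$ is therefore Lorentzian for $0<q\le1$. A positive scalar multiple of a Lorentzian polynomial is Lorentzian, so Theorem~\ref{thm-dual} gives that $P_{\nu,q}(\partial_x,\partial_y)$ preserves the Lorentzian property.

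\textbf{Expected obstacle.} The main point needing care is the bookkeeping in Steps 1–2. One must choose $\kappa$ so that the exponent of $y$ after reversal is $|S|$; this is exactly what makes the factor $|S|!$ cancel under $N$. One must also handle the convention $q^{-\nu(S)}=0$ when $\nu(S)=-\infty$, which is harmless here since $0<q\le1$ gives $q^{\infty}=0$.

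Two minor checks remain. First, Theorem~\ref{thm-dual} is stated for operators whose degree in each variable is bounded by $\kappa$, and the choice $\kappa_y=|E|$ satisfies this. Second, the degenerate case where $\nu\equiv-\infty$ makes $P_{\nu,q}$ the zero operator. In that case the statement is trivial, or it is excluded by the convention that $M^\natural$-concave functions have nonempty effective domain.
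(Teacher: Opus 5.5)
Your proposal is correct and follows the paper's proof essentially step for step. It uses the same choice $\kappa=(\mathbf 1_E,|E|)$ in Theorem~\ref{thm-dual}, the same reduction to $\frac{1}{|E|!}Z_{\nu^*,q}$, and the same appeal to Lemma~\ref{lemma-inv} and Theorem~\ref{thm-charact}; your explicit tracking of how $N$ cancels the factor $|S|!$, together with the substitution $T=E\setminus S$, makes precise what the paper leaves implicit.
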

\begin{proof}
We check the condition of Theorem~\ref{thm-dual}. Choose $\kappa$ to be 1 on the variables $x_i$ and $|E|$ for the variable $y$.
Thus, it is sufficient to prove that the polynomial $$\frac{1}{|E|!}\sum_{S\subseteq E}  q^{-\nu(S)}x^{E\setminus S} y^{|S|}$$ is Lorentzian.

Let us take the function $\nu^*(S):=\nu(E\setminus S)$. This is  $M^\natural$-concave by Lemma~\ref{lemma-inv}. Theorem~\ref{thm-charact} applied to $\nu^*$ and the fact that Lorentzianity is unchanged under products with positive constants shows that this polynomial is Lorentzian.
\end{proof}

Now we are ready to prove our main result that implies our other claims immediately.

\begin{Theorem}
For $M^\natural$-concave functions  $\nu_1,\nu_2:2^E \to \R\cup \{-\infty\}$ and real number $0< q \leq  1$ the polynomial $$R_{\nu_1,\nu_2,q}:=\sum_{S_2 \subseteq S_1\subseteq E}  q^{-\nu_1(S_1)-\nu_2(S_2)}x^{S_1\setminus S_2} y^{|S_2|}z^{|E \setminus S_1|}$$ is Lorentzian.
\end{Theorem}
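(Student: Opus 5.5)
The plan is to obtain $R_{\nu_1,\nu_2,q}$ by applying the differential operator of Lemma~\ref{lemma-operator} (for $\nu_2$) to a Lorentzian polynomial built from $\nu_1$, and then specialising a variable to $0$. I introduce an auxiliary variable $t$ and consider
$$F(x,t,y,z):=Z_{\nu_1,q}(x,z)\cdot (t+y)^{|E|}=\sum_{S_1\subseteq E} q^{-\nu_1(S_1)}x^{S_1}z^{|E\setminus S_1|}\,(t+y)^{|E|}.$$
By Theorem~\ref{thm-charact}, $Z_{\nu_1,q}$ is Lorentzian. The linear form $t+y$ is Lorentzian, hence so is its $|E|$-th power by Proposition~\ref{prop-restrict}(1), and therefore $F$ is Lorentzian, again by Proposition~\ref{prop-restrict}(1). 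Here $F$ is viewed in the variables $x,t,y,z$; adding variables on which a polynomial does not depend preserves Lorentzianity, for instance by the substitution and restriction rules of Proposition~\ref{prop-restrict}.

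Next I apply $P_{\nu_2,q}(\partial_x,\partial_t)$, with the role of $y$ in Lemma~\ref{lemma-operator} played by $t$; by that lemma this operator preserves the Lorentzian property. I compute its action termwise. The monomial $x^{S_2}t^{|E\setminus S_2|}$ of $P_{\nu_2,q}$ acts as $\partial_x^{S_2}\partial_t^{|E|-|S_2|}$. Since every $x_i$ appears in $F$ with degree at most one, $\partial_x^{S_2}x^{S_1}$ equals $x^{S_1\setminus S_2}$ if $S_2\subseteq S_1$ and $0$ otherwise. Also
$$\partial_t^{|E|-|S_2|}(t+y)^{|E|}=\frac{|E|!}{|S_2|!}(t+y)^{|S_2|}.$$
This factor exactly cancels the normalisation $\frac{|S_2|!}{|E|!}$ built into $P_{\nu_2,q}$. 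Hence
$$P_{\nu_2,q}(\partial_x,\partial_t)F=\sum_{S_2\subseteq S_1\subseteq E} q^{-\nu_1(S_1)-\nu_2(S_2)}x^{S_1\setminus S_2}(t+y)^{|S_2|}z^{|E\setminus S_1|},$$
and this polynomial is Lorentzian.

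Finally I set $t=0$. By Proposition~\ref{prop-restrict}(2) the result is either $0$ or Lorentzian, and it is exactly $R_{\nu_1,\nu_2,q}$. The zero case arises only when no pair $S_2\subseteq S_1$ has both $\nu_1(S_1)$ and $\nu_2(S_2)$ finite. In that case $R$ is the zero polynomial, which is Lorentzian as the limit convention allows, or the case is excluded as degenerate. The resulting polynomial is homogeneous of degree $|E|$, as required.

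I expect the main care to be needed in the bookkeeping around the operator. One must check that Lemma~\ref{lemma-operator} genuinely applies with $t$ in place of $y$ inside a polynomial ring containing the extra variables $y,z$; Theorem~\ref{thm-dual} is stated for operators on the full ring, and $\kappa$ can be taken as $0$ on $y,z$. One must also check that the factorial normalisation cancels exactly as claimed. The Lorentzian inputs themselves are supplied directly by Theorem~\ref{thm-charact} and Proposition~\ref{prop-restrict}.
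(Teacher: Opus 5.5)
Your proof is correct and is essentially the paper's argument. The paper applies $P_{\nu_2,q}(\partial_x,\partial_y)$ directly to $y^{|E|}Z_{\nu_1,q}(x,z)$ and obtains $R_{\nu_1,\nu_2,q}$ through the same factorial cancellation, so your auxiliary variable $t$, the factor $(t+y)^{|E|}$ and the final restriction $t=0$ are harmless but unnecessary. One small inaccuracy: regarding a Lorentzian polynomial as a polynomial in extra dummy variables does not follow from the restriction and substitution rules of Proposition~\ref{prop-restrict}, since those only delete or merge variables. It follows instead from the invariance of Lorentzian polynomials under linear changes of variables with nonnegative coefficients, a fact the paper also uses implicitly.
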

\begin{proof}
We show that we may get $R_{\nu_1,\nu_2,q}$ as follows: take the function $y^{|E|}Z_{\nu_1,q}(x,z)$ and apply the operator $P_{\nu,q}(\partial_{x},\partial_y)$ to it.

Let us determine the resulting polynomial. By the linearity of differentiation, we get $$\sum_{S_1\subseteq E} \sum_{S_2\subseteq E}  q^{-\nu_1(S_1)}x^{S_1} y^{|E|} z^{|E \setminus S_1|} q^{-\nu_2(S_2)} \partial_{x}^{S_2} \partial_{y}^{|E\setminus S_2|}\frac{|S_2|!}{|E|!}.$$
By evaluating the differentiation, we get:
$$\sum_{S_2 \subseteq S_1\subseteq E}  q^{-\nu_1(S_1)-\nu_2(S_2)}x^{S_1\setminus S_2} \frac{|E|!}{|S_2|!}y^{|S_2|}z^{|E \setminus S_1|}\frac{|S_2|!}{|E|!}$$
After simplifying, this is exactly the definition of $R_{\nu_1,\nu_2,q}$. As we have obtained $R_{\nu_1,\nu_2,q}$ by applying an operator preserving the Lorentzian property by Lemma~\ref{lemma-operator} to a Lorentzian polynomial by Theorem~\ref{thm-charact}, the statement follows.
\end{proof}

Let us now derive our other theorems from this claim.

\begin{proof}[Proof of Theorem \ref{main-thm}]
Let us restrict $R_{\nu_1,\nu_2,q}$ to $x=0$, resulting in
$$\sum_{ S_1=S_2\subseteq E}  q^{-\nu_1(S_1)-\nu_2(S_2)}y^{|S_2|}z^{|E \setminus S_1|}=\sum_{ S_1=S_2\subseteq E}  q^{-\nu(S)}y^{|S|}z^{|E \setminus S|}.$$ This is either Lorentzian or 0 by Proposition~\ref{prop-restrict}. As the coefficients of this $y^kz^{|E|-k}$ are exactly $I_{\nu,q;k}$, the ultra log-concavity follows immediately by Proposition~\ref{prop-ultra-lorentz}.
\end{proof}

\begin{proof}[Proof of Theorem \ref{main-thm-2}]
Let us restrict $R_{\nu_1,\nu_2,q}$ to $x_i=x_j$ (for all $i,j\in E$) and $y=z$, resulting in
$$\sum_{ S_1\subseteq S_2\subseteq E}  q^{-\nu_1(S_1)-\nu_2(S_2)}x^{|S_1\setminus S_2|} z^{|E \setminus (S_1\setminus S_2)|}.$$ As the coefficients of this $x^kz^{|E|-k}$ are exactly $J_{\nu_1,\nu_2,q;k}$, the ultra log-concavity follows immediately by Proposition~\ref{prop-ultra-lorentz}.
\end{proof}
\section*{Acknowledgements}
The author thanks Georg Loho posing this question. The author thanks Georg Loho, Kristóf Bérczi, Péter Csikvári, Benjamin Schröter and Petter Brändén for stimulating discussions and helpful assistance.
The author was supported by the Knut and Alice Wallenberg Foundation. 
\section*{Statement on AI Usage}
AI tools were not used in this work.
\bibliographystyle{alpha}

\bibliography{bibliography}

\end{document}